%
%
%
%
\documentclass{amsart}

\usepackage{amsmath,amsthm,amsfonts,amssymb,latexsym,mathrsfs,color}
\usepackage{hyperref}

\newtheorem{theorem}{Theorem}[section]
\newtheorem{lemma}[theorem]{Lemma}
\newtheorem{proposition}[theorem]{Proposition}
\newtheorem{corollary}[theorem]{Corollary}

\theoremstyle{definition}

\theoremstyle{remark}

\newcommand{\sech}{\ensuremath{\mathrm{sech\ }}}

\newcommand{\des}{{\rm des\,}}

\newcommand{\msn}{{\mathcal S}_n}
\newcommand{\sgn}{{\rm sgn\,}}
\newcommand{\rz}{{\rm RZ}}

\newcommand{\seps}{\prec}
\newcommand{\sep}{\preceq}
\newcommand{\R}{{\mathbb R}}

\numberwithin{equation}{section}



\begin{document}

\title{Polynomials with only real zeros and the Eulerian polynomials of type $D$}

\author{Shi-Mei Ma}
\address{School of Mathematics and Statistics, Northeastern University at Qinhuangdao, Hebei 066004,China}
\email{shimeima@yahoo.com.cn(S.-M. Ma)}
\thanks{This work is supported by~NSFC (11126217) and the Fundamental Research Funds for the Central Universities (N100323013)}

\subjclass[2000]{Primary 05A05; Secondary 26C10}



\keywords{Eulerian polynomials, Derivative polynomials, Real zeros}

\begin{abstract}
A remarkable identity involving the Eulerian polynomials of type $D$ was obtained by
Stembridge (Adv. Math. 106 (1994), p.~280, Lemma 9.1). In this paper we explore
an equivalent form of this identity. We prove Brenti's real-rootedness conjecture for the Eulerian polynomials of type $D$.
\end{abstract}

\maketitle



\section{Introduction}
Let $\msn$ denote the symmetric group of all permutations of $[n]$, where $[n]=\{1,2,\ldots,n\}$.
For a permutation $\pi\in\msn$, we define a {\it descent} to be a position $i$ such that $\pi(i)>\pi(i+1)$. Denote by $\des(\pi)$ the number of descents of $\pi$. Let
\begin{equation*}
A_n(x)=\sum_{\pi\in\msn}x^{\des(\pi)+1}=\sum_{k=1}^nA(n,k)x^{k}.
\end{equation*}
The polynomial $A_n(x)$ is called an {\it Eulerian polynomial}, while $A(n,k)$ is called an {\it Eulerian number}.
Denote by $B_n$ the Coxeter group of type $B$. Elements $\pi$ of $B_n$ are signed permutations of $\pm[n]$ such that $\pi(-i)=-\pi(i)$ for all $i$, where $\pm[n]=\{\pm1,\pm2,\ldots,\pm n\}$.
Let
$${B}_n(x)=\sum_{\pi\in B_n}x^{\des_B(\pi)}=\sum_{k=0}^nB(n,k)x^{k},$$
where
$\des_B=|\{i\in[n]:\pi(i-1)>\pi({i})\}|$ with $\pi(0)=0$.
The polynomial $B_n(x)$ is called an {\it Eulerian polynomial of type $B$}, while $B(n,k)$ is called an {\it Eulerian number of type $B$}.
Denote by $D_n$ the Coxeter group of type $D$. The Coxeter group $D_n$ is the subgroup of $B_n$ consisting of signed permutations $\pi=\pi(1)\pi(2)\cdots\pi(n)$ with an even number of negative entries.
Let
$${D}_n(x)=\sum_{\pi\in D_n}x^{\des_D(\pi)}=\sum_{k=0}^nD(n,k)x^{k},$$
where
$\des_D=|\{i\in[n]:\pi(i-1)>\pi({i})\}|$ with $\pi(0)=-\pi(2)$.
The polynomial $D_n(x)$ is called an {\it Eulerian polynomial of type $D$}, while $D(n,k)$ is called an {\it Eulerian number of type $D$} (see~\cite[A066094]{Sloane} for details).
Below are the polynomials ${D}_n(x)$ for $n\leq 3$:
$$D_0(x)=1,D_1(x)=1,D_2(x)=1+2x+x^2,D_3(x)=1+11x+11x^2+x^3.$$

In 1994,
Stembridge~\cite[Lemma 9.1]{Stembridge94} obtained the following remarkable identity:
\begin{equation}\label{Dnx-recu}
D_n(x)=B_n(x)-n2^{n-1}A_{n-1}(x) \quad {\text for}\quad n\geq 2.
\end{equation}
Let $P_n(x)=A_n(x)/x$.
It is well known that
$$\sum_{n=0}^{\infty}P_n(-1)\frac{x^n}{n!}=1+\tanh (x)$$
and
$$\sum_{n=0}^{\infty}B_n(-1)\frac{x^n}{n!}=\sech (2x)$$
(see~\cite{Hirzebruch08} for instance).
For $n\geq 3$, Chow~\cite[Corollary 6.10]{Chow08} obtained that
\begin{equation}\label{chow}
\sgn D_n(-1)=\begin{cases}
0 & \text{if $n$ is odd},\\
(-1)^{\frac{n}{2}} & \text{if $n$ is even}.
\end{cases}
\end{equation}

This paper is organized as follows.
Section~\ref{Section-2} is devoted to an equivalent form of the identity~\eqref{Dnx-recu}.
In Section~\ref{Section-3}, we prove Brenti's~\cite[Conjecture 5.1]{Brenti94} real-rootedness conjecture for the Eulerian polynomials of type $D$.
\section{Derivative polynomials}\label{Section-2}
In 1995, Hoffman~\cite{Hoffman95} introduced the derivative polynomials for tangent and secant:
\begin{equation*}\label{derivapoly-1}
\frac{d^n}{d\theta^n}\tan \theta=P_n(\tan \theta)\quad {\text and}\quad
\frac{d^n}{d\theta^n}\sec\theta=\sec\theta \cdot Q_n(\tan \theta).
\end{equation*}
Various refinements of the polynomials $P_n(u)$ and $Q_n(u)$ have been pursued by several authors
(see~\cite{Cvijovic09,Cvijovic10,Franssens07,Ma12} for instance).
The derivative polynomials for hyperbolic
tangent and secant are defined by
\begin{equation*}\label{derivapoly-2}
\frac{d^n}{d\theta^n}\tanh \theta=\widetilde{P}_n(\tanh \theta)\quad {\text and}\quad
\frac{d^n}{d\theta^n}\sech \theta=\sech\theta \cdot \widetilde{Q}_n(\tanh \theta).
\end{equation*}
It follows from $\tanh \theta=\mathrm i\tan(\theta/i)$ and $\sech \theta=\sec(\theta/i)$ that
\begin{equation*}\label{pnxqnx}
\widetilde{P}_n(x)=\mathrm i^{n-1}P_n(\mathrm i x) \quad
{\text and}\quad \widetilde{Q}_n(x)=\mathrm i^{n}Q_n(\mathrm i x).
\end{equation*}
From the chain rule it follows that the polynomials
$\widetilde{P}_n(x)$ satisfy
\begin{equation}\label{pnx-recu}
\widetilde{P}_{n+1}(x)=(1-x^2)\widetilde{P}'_n(x)
\end{equation}
with initial values $\widetilde{P}_0(x)=x$.
Similarly, $\widetilde{Q}_0(x)=1$ and
\begin{equation}\label{qnx-recu}
\widetilde{Q}_{n+1}(x)=(1-x^2)\widetilde{Q}'_n(x)-x\widetilde{Q}_n(x).
\end{equation}

Let
$$\tan^k (x)=\sum_{n\geq k}T(n,k)\frac{x^n}{n!}$$
and
$$\sec (x) \tan^k (x)=\sum_{n\geq k}S(n,k)\frac{x^n}{n!}.$$
The numbers $T(n,k)$ and $S(n,k)$ are respectively called the {\it tangent numbers of order $k$} (see~\cite[p.~428]{Carlitz72}) and the {\it secant numbers of order $k$} ((see~\cite[p.~305]{Carlitz75})).
The numbers $T(n,1)$ are sometimes called the {\it tangent numbers} and
$S(n,0)$ are called the {\it Euler numbers}.
Note that the tangent is an odd function and the secant is an even function. Then
$$T(2n,1)=S(2n+1,0)=0,\quad T(2n+1,1)\neq 0 \quad {\text and}\quad S(2n,0)\neq 0 .$$
Recently, Cvijovi\'c~\cite[Theorem 2]{Cvijovic09} showed that
$$\widetilde{P}_n(x)=(-1)^{\frac{n-1}{2}}T(n,1)+\sum_{k=1}^{n+1}\frac{(-1)^{\frac{n+k-1}{2}}}{k}T(n+1,k)x^k$$
and
$$\widetilde{Q}_n(x)=\sum_{k=0}^n(-1)^{\frac{n+k}{2}}S(n,k)x^k.$$
In particular, we have
\begin{equation}\label{pn0qn0}
\widetilde{P}_{2n-1}(0)=(-1)^{n-1}T(2n-1,1) \quad
{\text and}\quad \widetilde{Q}_{2n}(0)=(-1)^nS(2n,0).
\end{equation}
The first few of the polynomials $\widetilde{P}_{n}(x)$ and $\widetilde{Q}_{n}(x)$
are respectively given as follows:
$$ \widetilde{P}_{1}(x) =-x^2+1,  \widetilde{P}_{2}(x)=2x^3-2x, \widetilde{P}_{3}(x)=-6x^4+8x^2-2, \widetilde{P}_{4}(x)=24x^5-40x^3+16x;$$
$$ \widetilde{Q}_{1}(x) =-x,  \widetilde{Q}_{2}(x)=2x^2-1,  \widetilde{Q}_{3}(x)=-6x^3+5x, \widetilde{Q}_{4}(x)=24x^4-28x^2+5.$$

For $n\geq 2$, we define
\begin{equation*}\label{anxbnx-def}
a_n(x)=(x+1)^{n+1}A_{n}\left(\frac{x-1}{x+1}\right),\quad
b_n(x)=(x+1)^nB_n\left(\frac{x-1}{x+1}\right)
\end{equation*}
and
\begin{equation}\label{def-dnx}
d_n(x)=\left(\frac{x+1}{2}\right)^nD_n\left(\frac{x-1}{x+1}\right).
\end{equation}
Then
\begin{equation}\label{anx-bnx-recu}
2^nd_n(x)=b_n(x)-n2^{n-1}a_{n-1}(x)\quad {\text for}\quad n\geq 2.
\end{equation}

From~\cite[Theorem~5,~Theorem~6 ]{Franssens07}, we obtain
\begin{equation}\label{anx}
a_n(x)=(-1)^n\widetilde{P}_{n}(x)\quad {\text and}\quad b_n(x)=(-1)^n2^n\widetilde{Q}_{n}(x).
\end{equation}
Therefore, the polynomials $a_n(x)$ satisfy
the recurrence relation
\begin{equation}\label{anx-recurrence}
a_{n+1}(x)=(x^2-1)a'_n(x)
\end{equation}
with initial values $a_0(x)=x$. The polynomials $b_n(x)$ satisfy
the recurrence relation
\begin{equation}\label{bnx-recurrence}
b_{n+1}(x)=2(x^2-1)b'_n(x)+2xb_n(x)
\end{equation}
with initial values $b_0(x)=1$.
From~\eqref{Dnx-recu}, we get the following result.
\begin{proposition}\label{recu-dnx}
For $n\geq 2$, we have
\begin{equation}\label{dnxpnxqnx}
2d_n(x)={(-1)^n}(n\widetilde{P}_{n-1}(x)+2\widetilde{Q}_{n}(x)).
\end{equation}
\end{proposition}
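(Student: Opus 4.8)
The plan is to establish \eqref{dnxpnxqnx} by a direct substitution, feeding the closed forms \eqref{anx} into the rescaled identity \eqref{anx-bnx-recu}. No new combinatorial or analytic ingredient is required beyond what has already been assembled: the recurrence $2^nd_n(x)=b_n(x)-n2^{n-1}a_{n-1}(x)$ of \eqref{anx-bnx-recu}, which is itself the image of Stembridge's identity \eqref{Dnx-recu} under $x\mapsto(x-1)/(x+1)$ and the scalings defining $a_n$, $b_n$, $d_n$, together with the dictionary $a_n(x)=(-1)^n\widetilde{P}_n(x)$ and $b_n(x)=(-1)^n2^n\widetilde{Q}_n(x)$ from \eqref{anx}.

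First I would substitute \eqref{anx} into the right-hand side of \eqref{anx-bnx-recu}. The term $b_n(x)$ becomes $(-1)^n2^n\widetilde{Q}_n(x)$, while the term $a_{n-1}(x)$ becomes $(-1)^{n-1}\widetilde{P}_{n-1}(x)$. The only point demanding attention is the interplay of signs: since $(-1)^{n-1}=-(-1)^n$, the minus sign standing in front of the $a_{n-1}$ term in \eqref{anx-bnx-recu} is absorbed, so that the contribution of $\widetilde{P}_{n-1}$ enters with a plus. This is precisely what produces the $+$ inside the parenthesis of \eqref{dnxpnxqnx}.

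After the substitution the right-hand side reads $(-1)^n2^n\widetilde{Q}_n(x)+n2^{n-1}(-1)^n\widetilde{P}_{n-1}(x)$. I would then factor out the common $(-1)^n2^{n-1}$, giving $2^nd_n(x)=(-1)^n2^{n-1}\bigl(n\widetilde{P}_{n-1}(x)+2\widetilde{Q}_n(x)\bigr)$, and cancel $2^{n-1}$ from both sides to reach \eqref{dnxpnxqnx}. Because every step is elementary algebra, there is no genuine obstacle here; the only risk is a parity slip in the exponents of $-1$, so I would guard against it by testing the identity for $n=2$ and $n=3$ against the explicit polynomials $\widetilde{P}_n$, $\widetilde{Q}_n$ listed above and the values $D_2(x)$, $D_3(x)$ from the introduction, translated through the definition \eqref{def-dnx} of $d_n$.
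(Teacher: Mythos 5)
Your proposal is correct and is exactly the paper's (implicit) argument: the paper derives Proposition \ref{recu-dnx} by substituting the identifications \eqref{anx} into the rescaled Stembridge identity \eqref{anx-bnx-recu} and cancelling $2^{n-1}$, precisely as you do. The sign bookkeeping $(-1)^{n-1}=-(-1)^n$ and the sanity checks against $d_2(x)=x^2$ and $d_3(x)=3x^3-2x$ are both sound.
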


The first few terms of $d_n(x)$ can be computed directly as follows:
\begin{align*}
  d_2(x)& =x^2,\\
  d_3(x)& =3x^3-2x,\\
  d_4(x)& =12x^4-12x^2+1,\\
  d_5(x)& =60x^5-80x^3+21x,\\
  d_6(x)& =360x^6-600x^4+254x^2-13.
\end{align*}
It follows from~\eqref{pnx-recu} and~\eqref{qnx-recu} that
$d_n(-1)=(-1)^n$ for $n\geq 2$.

\begin{corollary}
For $n\geq 1$, we have $D_{2n-1}(-1)=0$ and
\begin{equation*}\label{D2n}
D_{2n}(-1)=(-4)^n(S(2n,0)-nT(2n-1,1)),
\end{equation*}
where $T(n,1)$ are the tangent numbers and $S(n,0)$ are the Euler numbers.
\end{corollary}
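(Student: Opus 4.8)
The plan is to evaluate everything at the point $x=0$, where the substitution appearing in \eqref{def-dnx} sends $\frac{x-1}{x+1}$ to $-1$. Setting $x=0$ in \eqref{def-dnx} gives $d_n(0)=2^{-n}D_n(-1)$, so that $D_n(-1)=2^nd_n(0)$, and it suffices to compute the constant term $d_n(0)$. For this I would feed $x=0$ into Proposition~\ref{recu-dnx}: evaluating \eqref{dnxpnxqnx} at $x=0$ yields, for $n\geq 2$,
\begin{equation*}
D_n(-1)=2^nd_n(0)=2^{n-1}(-1)^n\bigl(n\widetilde{P}_{n-1}(0)+2\widetilde{Q}_{n}(0)\bigr).
\end{equation*}
Thus the entire problem reduces to knowing the two constant terms $\widetilde{P}_{n-1}(0)$ and $\widetilde{Q}_{n}(0)$.

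Next I would record the parity information for these constant terms. From Cvijovi\'c's formulas quoted before \eqref{pn0qn0}, the constant term of $\widetilde{P}_n$ is $(-1)^{(n-1)/2}T(n,1)$ and that of $\widetilde{Q}_n$ is $(-1)^{n/2}S(n,0)$. Since $T(2k,1)=0$ and $S(2k+1,0)=0$, it follows at once that $\widetilde{P}_n(0)=0$ for even $n$ and $\widetilde{Q}_n(0)=0$ for odd $n$, while the surviving values are exactly those in \eqref{pn0qn0}, namely $\widetilde{P}_{2m-1}(0)=(-1)^{m-1}T(2m-1,1)$ and $\widetilde{Q}_{2m}(0)=(-1)^mS(2m,0)$.

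With these facts the two cases of the corollary split cleanly. If $n=2m-1$ is odd, then $n-1=2m-2$ is even and $n$ is odd, so both $\widetilde{P}_{n-1}(0)$ and $\widetilde{Q}_n(0)$ vanish, and the displayed formula gives $D_{2m-1}(-1)=0$, consistent with \eqref{chow}. If $n=2m$ is even, then $n-1=2m-1$ is odd and $n$ is even, so both constant terms are nonzero; substituting the values from \eqref{pn0qn0} produces
\begin{equation*}
D_{2m}(-1)=2^{2m-1}\bigl(2m(-1)^{m-1}T(2m-1,1)+2(-1)^mS(2m,0)\bigr).
\end{equation*}

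The only remaining work is the sign bookkeeping, which I expect to be the one place requiring care rather than a genuine obstacle. I would factor $2(-1)^m$ out of the bracket, using $(-1)^{m-1}=-(-1)^m$, to rewrite the right-hand side as $2^{2m}(-1)^m\bigl(S(2m,0)-mT(2m-1,1)\bigr)$, and then absorb $2^{2m}(-1)^m=(-4)^m$ to reach the asserted identity $D_{2m}(-1)=(-4)^m(S(2m,0)-mT(2m-1,1))$. The structural content is entirely carried by the evaluation $x=0$ together with Proposition~\ref{recu-dnx}; the only subtlety is tracking the parity of the indices $n-1$ and $n$ and the interplay of the signs $(-1)^{m-1}$ and $(-1)^m$ in the even case. (Since Proposition~\ref{recu-dnx} holds only for $n\geq 2$, the case $n=1$ is exceptional and one checks $D_1(x)=1$ directly.)
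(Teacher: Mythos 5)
Your proof is correct and takes essentially the same route as the paper: evaluate \eqref{def-dnx} and Proposition~\ref{recu-dnx} at $x=0$, use the vanishing of $\widetilde{P}_{2m-2}(0)$ and $\widetilde{Q}_{2m-1}(0)$ for the odd case, and substitute the values \eqref{pn0qn0} with the sign bookkeeping $2^{2m}(-1)^m=(-4)^m$ for the even case. Your closing parenthetical is in fact more careful than the paper, whose argument likewise only applies from index $2$ onward; since $D_1(-1)=1$, the odd-index claim genuinely begins at $D_3$.
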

\begin{proof}
Note that $D_{2n-1}(-1)=2^{2n-1}d_{2n-1}(0)$. It is easy to verify that
$\widetilde{P}_{2n-2}(0)=\widetilde{Q}_{2n-1}(0)=0$,
$\widetilde{P}_{2n-1}(0)=(-1)^{n-1}T(2n-1,1)$ and $\widetilde{Q}_{2n}(0)=(-1)^nS(2n,0)$.
Then $D_{2n-1}(-1)=0$.
By~\eqref{def-dnx}, we obtain $D_{2n}(-1)=4^nd_{2n}(0)$.
From~\eqref{dnxpnxqnx}, we obtain $d_{2n}(0)=n\widetilde{P}_{2n-1}(0)+\widetilde{Q}_{2n}(0)$.
Then by~\eqref{pn0qn0}, we get the desired result.
\end{proof}
\section{Main results}\label{Section-3}
Polynomials with only real zeros arise often in combinatorics, algebra and
geometry. We refer the reader to~\cite{Branden06,Chow03,Chow08,Dilks09,Liu07,Visontai12} for various results involving zeros of the polynomials $A_n(x),B_n(x)$ and $D_n(x)$.
This Section is devoted to prove Brenti's~\cite[Conjecture 5.1]{Brenti94} real-rootedness conjecture for the Eulerian polynomials of type $D$.

Let $\rz$ denote the set of real polynomials with only real zeros.
Denote by $\rz(I)$ the set of such polynomials all
whose zeros are in the interval $I$. Suppose that $f,F\in\rz$. Let $\{s_i\}$ and $\{r_j\}$ be all zeros of $F$ and $f$ in nonincreasing
order respectively. Following~\cite{Chudnovsky07}, we say that $F$ {\it interleaves} $f$, denoted by
$f\sep F$, if $\deg f\le\deg F\le\deg f+1$ and
\begin{equation}\label{sep}
s_1\geq r_1\geq s_2\geq r_2\geq s_3\geq r_3\geq\cdots.
\end{equation}
If no equality sign occurs in~\eqref{sep}, then we say that $F$ {\it strictly interleaves} $f$.
Let $f\seps F$ denote $F$ strictly interleaves $f$.

The key ingredient of our proof is the following result due to Hetyei~\cite{Hetyei08}.
\begin{lemma}[{\cite[Proposition 6.5,~Theorem 8.6]{Hetyei08}}]\label{Hetyei}
For $n\geq 1$, we have $\widetilde{P}_{n}(x)\in\rz[-1,1]$, $\widetilde{Q}_{n}(x)\in\rz(-1,1)$ and $\widetilde{Q}_{n}(x)\seps\widetilde{P}_{n}(x)$.
Moreover, $\widetilde{P}_{n-1}(x)\sep\widetilde{P}_{n}(x)$ and
$\widetilde{Q}_{n-1}(x)\sep\widetilde{Q}_{n}(x)$ for $n\geq 2$.
\end{lemma}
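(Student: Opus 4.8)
The plan is to prove the three assertions by induction on $n$, driving everything from the recurrences \eqref{pnx-recu} and \eqref{qnx-recu}. First I would record the structural data I will need. From \eqref{pnx-recu} one reads off $\deg\widetilde{P}_n=n+1$ and $\widetilde{P}_n(\pm1)=0$ for $n\ge1$ (since $\widetilde{P}_{n+1}(\pm1)=(1-1)\widetilde{P}'_n(\pm1)=0$), while from \eqref{qnx-recu} one gets $\deg\widetilde{Q}_n=n$, leading coefficient $(-1)^n n!$, and the boundary values $\widetilde{Q}_n(1)=(-1)^n$, $\widetilde{Q}_n(-1)=1$ (again evaluating \eqref{qnx-recu} at $x=\pm1$). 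Thus $\pm1$ are zeros of every $\widetilde{P}_n$ but of no $\widetilde{Q}_n$, which already accounts for the closed versus open intervals in the statement.

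The $\widetilde{P}$-part is then a clean Rolle induction. Assuming $\widetilde{P}_n$ has $n+1$ simple real zeros with $\pm1$ as the extreme ones, Rolle's theorem gives that $\widetilde{P}'_n$ has $n$ simple zeros, one strictly inside each gap, hence all lying in $(-1,1)$ and strictly interlacing those of $\widetilde{P}_n$. Multiplying by $(1-x^2)$ in \eqref{pnx-recu} reinserts $\pm1$, so $\widetilde{P}_{n+1}=(1-x^2)\widetilde{P}'_n$ has $n+2$ simple zeros in $[-1,1]$ with extremes $\pm1$, and the interlacing is preserved; this yields both $\widetilde{P}_n\in\rz[-1,1]$ and $\widetilde{P}_{n-1}\sep\widetilde{P}_n$. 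For the $\widetilde{Q}$-part I would run a sign-evaluation induction. If $v_1>\cdots>v_n$ are the simple interior zeros of $\widetilde{Q}_n$, then \eqref{qnx-recu} gives $\widetilde{Q}_{n+1}(v_j)=(1-v_j^2)\widetilde{Q}'_n(v_j)$; since $1-v_j^2>0$ and $\widetilde{Q}'_n$ alternates sign along consecutive simple zeros (with $\widetilde{Q}'_n(v_1)$ of sign $(-1)^n$, the sign of the leading coefficient), the values $\widetilde{Q}_{n+1}(v_j)$ strictly alternate. Combining this with $\widetilde{Q}_{n+1}(1)=(-1)^{n+1}$ and $\widetilde{Q}_{n+1}(-1)=1$ produces exactly $n+1$ sign changes on $(-1,1)$ — one in each gap $(v_{j+1},v_j)$, one in $(v_1,1)$, and one in $(-1,v_n)$ — accounting for all $n+1$ zeros of $\widetilde{Q}_{n+1}$. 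Hence $\widetilde{Q}_{n+1}\in\rz(-1,1)$ and $\widetilde{Q}_n\seps\widetilde{Q}_{n+1}$, giving $\widetilde{Q}_{n-1}\sep\widetilde{Q}_n$.

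The hard part is the cross-interlacing $\widetilde{Q}_n\seps\widetilde{P}_n$, because the two recurrences are decoupled and a naive comparison of the zeros of $\widetilde{P}'_n$ with those of $\widetilde{Q}_{n+1}$ does not close. Here I would pass to the substitution $x=\tanh\theta$ and form the unit-modulus function $w=\sech\theta+\mathrm{i}\tanh\theta=e^{\mathrm{i}\,\mathrm{gd}\,\theta}$, for which the defining relations for $\widetilde{P}_n,\widetilde{Q}_n$ give $\frac{d^n}{d\theta^n}w=\sech\theta\,\widetilde{Q}_n(\tanh\theta)+\mathrm{i}\,\widetilde{P}_n(\tanh\theta)$. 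Because $\sech\theta>0$, the zeros of $\widetilde{Q}_n$ and of $\widetilde{P}_n$ are respectively the zeros of the real and imaginary parts of the plane curve $\theta\mapsto w^{(n)}(\theta)$, and these interlace precisely when $\arg w^{(n)}$ is strictly monotone in $\theta$. Computing $\frac{d}{d\theta}\arg w^{(n)}=\sech\theta\,W_n/\lvert w^{(n)}\rvert^2$ with the Wronskian-type quantity $W_n=\widetilde{P}_{n+1}\widetilde{Q}_n-\widetilde{P}_n\widetilde{Q}_{n+1}$, the whole question reduces to showing $W_n(x)>0$ on $(-1,1)$ (which simultaneously forces $w^{(n)}\neq0$, so the argument is well defined).

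This positivity is the crux, and I expect it to be the main technical obstacle. The first cases $W_0=1$, $W_1=1-x^2$, $W_2=2(1-x^2)$, $W_3=2(1-x^2)(5-3x^2)$ are all products of factors positive on $(-1,1)$, and I would prove $W_n>0$ in general by induction, deriving a recurrence for $W_n$ directly from \eqref{pnx-recu} and \eqref{qnx-recu}. Granting $W_n>0$, the strict monotonicity of $\arg w^{(n)}$ forces the real and imaginary parts of $w^{(n)}$ to vanish alternately on $(-1,1)$, which is exactly the strict interlacing $\widetilde{Q}_n\seps\widetilde{P}_n$, completing the lemma.
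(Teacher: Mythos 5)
The paper never proves this lemma: it is imported verbatim from Hetyei \cite[Proposition 6.5, Theorem 8.6]{Hetyei08}, so your attempt must stand on its own as a from-scratch proof. Most of it does. The structural facts ($\deg\widetilde{P}_n=n+1$, $\widetilde{P}_n(\pm1)=0$, $\deg\widetilde{Q}_n=n$, $\widetilde{Q}_n(1)=(-1)^n$, $\widetilde{Q}_n(-1)=1$) are correct; the Rolle induction giving $\widetilde{P}_n\in\rz[-1,1]$ and $\widetilde{P}_{n-1}\sep\widetilde{P}_n$ is sound; the sign-alternation induction giving $\widetilde{Q}_n\in\rz(-1,1)$ and $\widetilde{Q}_{n-1}\seps\widetilde{Q}_n$ is sound (the values $\widetilde{Q}_{n+1}(v_j)=(1-v_j^2)\widetilde{Q}'_n(v_j)$ together with the boundary values do force $n+1$ sign changes); and the reduction of the cross-interlacing $\widetilde{Q}_n\seps\widetilde{P}_n$ to the positivity of $W_n=\widetilde{P}_{n+1}\widetilde{Q}_n-\widetilde{P}_n\widetilde{Q}_{n+1}$ on $(-1,1)$, via monotonicity of $\arg w^{(n)}$ for $w=\sech\theta+\mathrm{i}\tanh\theta$, is a correct Hermite--Biehler-type equivalence, including the observation that $W_n>0$ rules out common zeros.

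The genuine gap is that $W_n>0$ is exactly where the lemma lives, and you do not prove it: you verify $n\le 3$ and defer to ``an induction deriving a recurrence for $W_n$ directly from \eqref{pnx-recu} and \eqref{qnx-recu}.'' That plan does not close as described. Substituting the two recurrences gives
\begin{equation*}
W_n=(1-x^2)\bigl(\widetilde{P}'_n\widetilde{Q}_n-\widetilde{P}_n\widetilde{Q}'_n\bigr)+x\,\widetilde{P}_n\widetilde{Q}_n,
\end{equation*}
which expresses $W_n$ not in terms of $W_{n-1}$ but in terms of the Wronskian $U_n=\widetilde{P}'_n\widetilde{Q}_n-\widetilde{P}_n\widetilde{Q}'_n$ plus a product term whose sign oscillates on $(-1,1)$; differentiating $U_n$ brings in second derivatives, and no self-contained system in $(W_n)$ or $(U_n,W_n)$ emerges. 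Worse, one-signedness of the Wronskian $U_n$ is itself equivalent (by the same Hermite--Biehler/Hermite--Kakeya--Obreschkoff circle of ideas you invoke) to the interlacing of $\widetilde{P}_n$ and $\widetilde{Q}_n$ that you are trying to establish, so your reduction is effectively circular unless the full interlacing configuration is carried as the induction hypothesis and the zero positions of $\widetilde{P}_{n+1},\widetilde{Q}_{n+1}$ are re-derived from it at each step. That joint induction is precisely the substantive content of Hetyei's Proposition 6.5/Theorem 8.6, and it is the piece missing from your argument; as written, the proof establishes everything in the lemma except $\widetilde{Q}_n\seps\widetilde{P}_n$, which is reduced to an unproven claim of comparable difficulty.
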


By Lemma~\ref{Hetyei}, we obtain
$a_{n-1}(x)\sep a_{n}(x), b_{n-1}(x)\sep b_{n}(x)$ and $b_{n}(x)\seps a_{n}(x)$.
Let $\sgn$ denote the sign function defined on $\R$ by
\begin{equation*}
\sgn x=\begin{cases}
1 & \text{if $x>0$},\\
0 & \text{if $x=0$},\\
-1 & \text{if $x<0$}.
\end{cases}
\end{equation*}
We now present the main result of this paper.
\begin{theorem}\label{mthmzero}
For $n\geq 2$, we have $D_n(x)\in\rz(-\infty,0)$.
\end{theorem}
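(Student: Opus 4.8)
The plan is to transport the problem through the M\"obius substitution that defines $d_n$ and then to exploit the explicit formula $2^n d_n(x)=b_n(x)-n2^{n-1}a_{n-1}(x)$ coming from~\eqref{anx-bnx-recu}. First I would record that the map $x\mapsto (x-1)/(x+1)$ carries the interval $(-1,1)$ monotonically onto $(-\infty,0)$, so that by~\eqref{def-dnx} a number $y\in(-\infty,0)$ is a zero of $D_n$ exactly when the corresponding $x\in(-1,1)$ is a zero of $d_n$. Since the leading coefficient of $d_n$ equals $n!/2>0$ (read off from~\eqref{anx-bnx-recu} together with the recurrences~\eqref{anx-recurrence} and~\eqref{bnx-recurrence}), $d_n$ has degree $n$, and the theorem becomes equivalent to the assertion $d_n(x)\in\rz(-1,1)$. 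Everything therefore reduces to locating $n$ real zeros of $d_n=2^{-n}b_n-\tfrac n2 a_{n-1}$ inside $(-1,1)$.

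The engine is the interlacing data $a_{n-1}\sep a_n$ and $b_n\seps a_n$ recorded after Lemma~\ref{Hetyei}. Both $a_{n-1}$ and $b_n$ have degree $n$ and positive leading coefficients, and both are interleaved from above by the degree-$(n+1)$ polynomial $a_n$, whose zeros $\alpha_1=1>\alpha_2>\cdots>\alpha_n>\alpha_{n+1}=-1$ all lie in $[-1,1]$ (the extreme ones $\pm1$ being also zeros of $a_{n-1}$). I would evaluate $d_n$ at these $n+1$ points. At the endpoints $\pm1$ one has $a_{n-1}(\pm1)=0$, so $d_n(\pm1)=2^{-n}b_n(\pm1)\neq0$ and $\sgn d_n(\pm1)=\sgn b_n(\pm1)$. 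At an interior $\alpha_i$ both $a_{n-1}(\alpha_i)$ and $b_n(\alpha_i)$ are nonzero and, being values at a zero of the common upper interlacer of two polynomials with positive leading coefficients, they share the same sign $\epsilon_i$; moreover $\epsilon_i=\sgn b_n(\alpha_i)$ alternates in $i$ because $b_n\seps a_n$. Consequently $d_n(\alpha_i)=\epsilon_i\bigl(2^{-n}\lvert b_n(\alpha_i)\rvert-\tfrac n2\lvert a_{n-1}(\alpha_i)\rvert\bigr)$.

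The proof then closes provided one establishes the quantitative inequality
\begin{equation*}
\lvert b_n(\alpha_i)\rvert> n2^{n-1}\,\lvert a_{n-1}(\alpha_i)\rvert,\qquad 2\le i\le n,
\end{equation*}
equivalently $2\lvert\widetilde{Q}_n(\alpha)\rvert> n\lvert\widetilde{P}_{n-1}(\alpha)\rvert$ at every critical point $\alpha\in(-1,1)$ of $\widetilde{P}_{n-1}$ (these are exactly the interior zeros of $a_n$, since $a_n=(-1)^n\widetilde{P}_n$ and $\widetilde{P}_n=(1-x^2)\widetilde{P}_{n-1}'$). Granting it, $\sgn d_n(\alpha_i)=\sgn b_n(\alpha_i)$ at all $n+1$ points, so $d_n$ changes sign $n$ times on $(-1,1)$; as $\deg d_n=n$ this accounts for every zero and forces $d_n\in\rz(-1,1)$. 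The case $n=2$ is the base point $d_2=x^2$, while for $n\ge3$ the inequality is strict, so the zeros are simple and lie in the open interval.

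The main obstacle is precisely this inequality: it is the one place where the minus sign in $b_n-n2^{n-1}a_{n-1}$ could destroy real-rootedness, and it is \emph{not} a formal consequence of interlacing. Indeed $b_n-\lambda a_{n-1}$ ceases to be real-rooted once $\lambda$ exceeds a threshold (already visible for $n=3$), so the specific value $\lambda=n2^{n-1}$ must be controlled. I would attack it by a joint induction on the pair $n\widetilde{P}_{n-1}\pm2\widetilde{Q}_n$ driven by~\eqref{pnx-recu} and~\eqref{qnx-recu}: one writes $4\widetilde{Q}_n^2-n^2\widetilde{P}_{n-1}^2$ as a single polynomial and tracks its sign at the critical points of $\widetilde{P}_{n-1}$, or, equivalently, produces a Wronskian-type identity between $\widetilde{P}_{n-1}$ and $\widetilde{Q}_n$ that pins down the magnitude ratio there. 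Showing that this hypothesis survives the step $n\mapsto n+1$ through~\eqref{pnx-recu}--\eqref{qnx-recu} is the technical heart of the argument.
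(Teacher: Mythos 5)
Your reduction to $d_n(x)\in\rz(-1,1)$ and the general sign-counting frame are sound, but the proof has a genuine gap at exactly the point you flag: the inequality $2\lvert\widetilde{Q}_n(\alpha)\rvert>n\lvert\widetilde{P}_{n-1}(\alpha)\rvert$ at the interior zeros $\alpha$ of $\widetilde{P}_n$ is never proved, only announced as something to be attacked by an unspecified induction. This is not a deferrable technicality; it is the whole theorem in disguise. Indeed, for $n$ even the point $\alpha=0$ is an interior zero of $\widetilde{P}_n$, and there your inequality reads $2S(n,0)>nT(n-1,1)$, which is precisely the nonvanishing statement $\sgn D_n(-1)=(-1)^{n/2}$ of Chow~\eqref{chow} --- a result the paper must import as an external input (via the Corollary $D_{2n}(-1)=(-4)^n(S(2n,0)-nT(2n-1,1))$). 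So your plan, as written, silently assumes something at least as strong as \eqref{chow} at one evaluation point, and an analogous unproved magnitude bound at all the others.

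The paper's proof is engineered to avoid this comparison of magnitudes everywhere except at the origin. Instead of evaluating $d_n=2^{-n}b_n-\tfrac{n}{2}a_{n-1}$ at the zeros of the common interlacer $a_n$, where \emph{both} terms survive and their relative sizes must be controlled, it evaluates at the zeros of $a_{n-1}$ and at the zeros of $b_n$ separately: at a zero of $a_{n-1}$ one has $d_n=2^{-n}b_n$, and at a zero of $b_n$ one has $d_n=-\tfrac{n}{2}a_{n-1}$, so in each case the sign of $d_n$ is read off from interlacing alone, with no quantitative input. Stitching these alternating signs along the interlaced sequence of zeros yields $2n-2$ (resp.\ $2n-2$) sign changes, and only the two zeros nearest the origin require extra information --- supplied by \eqref{chow} in the even case and by \eqref{dnxpnxqnx} in the odd case. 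If you want to salvage your version, the honest route is to prove your inequality by combining the paper's interlacing data with exactly these two inputs; but then you have essentially reconstructed the paper's argument with an extra, unnecessary layer. As it stands, your proposal is an incomplete proof outline, not a proof.
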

\begin{proof}
Clearly, $D_n(x)\in\rz(-\infty,0)$ if and only if $d_n(x)\in\rz(-1,1)$.
Since $d_2(x)=x^2$ and $d_3(x)=3x^3-2x$, it suffices to consider the case $n\geq 4$.

Note that the polynomials $a_{n}(x)$ and $b_n(x)$ have the following expressions:
\begin{equation*}\label{any}
a_n(x)=\sum_{k=0}^{\lfloor(n+1)/2\rfloor}(-1)^kp(n,n-2k+1)x^{n-2k+1},
\end{equation*}
\begin{equation*}\label{any}
 b_n(x)=\sum_{k=0}^{\lfloor{n/2}\rfloor}(-1)^kq(n,n-2k)x^{n-2k}.
\end{equation*}
Using Lemma~\ref{Hetyei}, we write
$$a_{2n-1}(x)=(2n-1)!\prod_{i=1}^n(x-s_i)(x+s_i),$$
$$a_{2n}(x)=(2n)!x\prod_{i=1}^n(x-a_i)(x+a_i),$$
$$b_{2n}(x)=(2n)!4^{n}\prod_{j=1}^n(x-r_j)(x+r_j),$$
and
$$b_{2n+1}(x)=(2n+1)!2^{2n+1}x\prod_{j=1}^{n}(x-b_j)(x+b_j),$$
where
\begin{equation}\label{zeros-1}
1=s_1> r_1>s_2> r_2> \cdots >r_{n-1}>s_n>r_n>0
\end{equation}
and
\begin{equation}\label{zeros-2}
1=a_1> b_1>a_2> b_2> \cdots b_{n-1}>a_n>b_n>0.
\end{equation}
Using~\eqref{anx-recurrence} and~\eqref{bnx-recurrence},
the inequalities~\eqref{zeros-1} and~\eqref{zeros-2} for zeros can be easily proved by induction on $n$.
We omit the proof of this for brevity.

By~\eqref{anx-bnx-recu}, we get $$d_{2n}(x)=\frac{b_{2n}(x)}{4^n}-na_{2n-1}(x).$$
Let $F(x)=\prod_{i=1}^n(x-s_i)$ and $f(x)=\prod_{j=1}^n(x-r_j)$.
Then $$d_{2n}(x)=(2n-1)!(-1)^nn\{2f(x)f(-x)-F(x)F(-x)\}.$$
Note that $\sgn d_{2n}(s_{j+1})=(-1)^{j}$ and $\sgn d_{2n}(r_j)=(-1)^{j+1}$, where $1\leq j\leq n-1$. .
Therefore, $d_{2n}(x)$ has precisely one zero in each of $2n-2$ intervals $(s_{j+1},r_j)$ and $(-r_j,-s_{j+1})$
Note that $\sgn d_{2n}(r_n)=(-1)^{n-1}$ and $\sgn d_{2n}(-r_n)=(-1)^{n+1}$.
It follows from~\eqref{chow} that $\sgn d_{2n}(0)=(-1)^n$.
Therefore, $d_{2n}(x)$ has precisely one zero in each of the intervals $(-r_n,0)$ and $(0,r_n)$.
Thus $d_{2n}(x)\in\rz(-1,1)$.

Along the same lines, by~\eqref{anx-bnx-recu}, we get
$$d_{2n+1}(x)=\frac{b_{2n+1}(x)}{2^{2n+1}}-\frac{1}{2}(2n+1)a_{2n}(x).$$
Let $G(x)=\prod_{i=1}^n(x-a_i)$ and $g(x)=\prod_{j=1}^n(x-b_j)$. Then
$$d_{2n+1}(x)=(2n+1)!(-1)^nx\{g(x)g(-x)-\frac{1}{2}G(x)G(-x)\}.$$
Note that $\sgn d_{2n+1}(a_{j+1})=(-1)^{j}$ and $\sgn d_{2n+1}(b_j)=(-1)^{j+1}$, where $1\leq j\leq n-1$.
Therefore, $d_{2n+1}(x)$ has precisely one zero in each of $2n-2$ intervals $(a_{j+1},b_j)$ and $(-b_j,-a_{j+1})$. Note that $\sgn d_{2n+1}(b_n)=(-1)^{n+1}$ and $\sgn d_{2n+1}(-b_n)=(-1)^{n}$.
It follows from~\eqref{dnxpnxqnx} that
$$\sgn \lim_{x \to 0}\frac{d_{2n+1}(x)}{x}=(-1)^{n}.$$
Hence
$$\sgn \lim_{x \to 0^-}d_{2n+1}(x)=(-1)^{n+1}\quad {\text and}\quad \sgn \lim_{x \to 0^+}d_{2n+1}(x)=(-1)^{n}.$$
Therefore, $d_{2n+1}(x)$ has precisely one zero in each of the intervals $(-b_n,0)$ and $(0,b_n)$. Moreover, $d_{2n+1}(x)$ has a simple zero $x=0$.
Thus $d_{2n+1}(x)\in\rz(-1,1)$.

In conclusion, we define
$$d_{2n}(x)=\frac{(2n)!}{2}\prod_{i=1}^n(x-c_i)(x+c_i)$$
and
$$d_{2n+1}(x)=\frac{(2n+1)!}{2}x\prod_{i=1}^n(x-d_i)(x+d_i),$$
where $c_1>c_2>\cdots>c_{n-1}>c_n$ and $d_1>d_2>\cdots>d_{n-1}>d_n$.
Then
\begin{equation}\label{zeros-inequ1}
r_1>c_1>s_2>r_2>c_2>s_3>\cdots >r_{n-1}>c_{n-1}>s_n>r_n>c_n>0
\end{equation}
and
\begin{equation}\label{zeros-inequ2}
b_1>d_1>a_2>b_2>d_2>a_3>\cdots >b_{n-1}>d_{n-1}>a_n>b_n>d_n>0.
\end{equation}
This completes the proof.
\end{proof}

We say that the polynomials $f_1(x),\ldots,f_k(x)$ are {\it compatible} if for all nonnegative real numbers $c_1,c_2,\ldots,c_k$, we have $\sum_{i=1}^kc_if_i(x)\in\rz$. Let $f(x),g(x)\in\rz$. A {\it common interleaver} for
$f(x)$ and $g(x)$ is a polynomial that interleaves $f(x)$ and $g(x)$ simultaneously.
Denote by $n_f(x)$ the number of real zeros of a polynomial $f(x)$ that lie
in the interval $[x,\infty)$ (counted with their multiplicities).
Chudnovsky and Seymour~\cite{Chudnovsky07} established the following two lemmas.
\begin{lemma}[{\cite[3.5]{Chudnovsky07}}]\label{Chudnovsky}
Let $f(x),g(x)\in\rz$. Then $f(x)$ and $g(x)$ have a common interleaver
if and only if $|n_{f}(x)-n_{g}(x)|\leq 1$ for all $x\in \R$.
\end{lemma}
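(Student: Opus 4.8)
The plan is to translate the interleaving relation entirely into the language of the step functions $n_f$, after which both directions of the equivalence become almost formal. The one fact I would isolate and prove first is the dictionary
\[
f\sep F\iff n_F(x)-n_f(x)\in\{0,1\}\ \text{ for all }x\in\R,
\]
valid for every pair $f,F\in\rz$; call this $(\star)$. Throughout I list the zeros of each polynomial in nonincreasing order and with multiplicity, and I use that $x\mapsto n_f(x)$ is a left-continuous, nonincreasing, integer-valued step function with $n_f(+\infty)=0$ and $n_f(-\infty)=\deg f$. The point of $(\star)$ is that it absorbs repeated zeros and the degree bound $\deg f\le\deg F\le\deg f+1$ uniformly, so no separate case analysis for multiplicities is needed later.

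To prove $(\star)$, write the zeros of $f$ as $r_1\ge\cdots\ge r_d$ and those of $F$ as $s_1\ge\cdots\ge s_e$. Evaluating at $x=r_i$ and at $x=s_i$ shows that the bound $n_F(x)\ge n_f(x)$ for all $x$ is equivalent to the rank-by-rank comparison $s_i\ge r_i$ for every $i$, while the bound $n_F(x)-n_f(x)\le 1$ for all $x$ is equivalent to $r_i\ge s_{i+1}$ for every $i$. Together these two chains of inequalities are precisely $s_1\ge r_1\ge s_2\ge r_2\ge\cdots$, which is the defining condition of $f\sep F$, and the boundary values $n_f(-\infty)=\deg f$, $n_F(-\infty)=\deg F$ record the degree condition. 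This establishes $(\star)$ in both directions.

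Granting $(\star)$, necessity is immediate: if $h$ is a common interleaver, then $n_h-n_f\in\{0,1\}$ and $n_h-n_g\in\{0,1\}$, so
\[
n_f(x)-n_g(x)=\bigl(n_h(x)-n_g(x)\bigr)-\bigl(n_h(x)-n_f(x)\bigr)\in\{-1,0,1\}
\]
for every $x$, which is exactly $|n_f(x)-n_g(x)|\le 1$. For sufficiency I would build the interleaver by hand: assuming $|n_f-n_g|\le 1$ everywhere, set $N(x)=\max\bigl(n_f(x),n_g(x)\bigr)$. This $N$ is nonincreasing, integer-valued, equal to $\max(\deg f,\deg g)$ for very negative $x$ and to $0$ for large $x$, hence it is the root-counting function $n_h$ of a (monic) polynomial $h\in\rz$ whose zeros are the jump points of $N$ with multiplicities equal to the jump sizes. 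For this $h$ one computes $n_h-n_f=\max(0,n_g-n_f)\in\{0,1\}$ and, symmetrically, $n_h-n_g=\max(0,n_f-n_g)\in\{0,1\}$, where the hypothesis $|n_f-n_g|\le1$ is exactly what keeps these differences in $\{0,1\}$. By $(\star)$ this reads $f\sep h$ and $g\sep h$, so $h$ is a common interleaver.

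The only genuine work, and the step I expect to be most delicate, is $(\star)$: one must check that the translation between the interlacing chain and the two-sided bound $n_F-n_f\in\{0,1\}$ really does survive repeated zeros and the boundary (degree) conditions, and that an arbitrary nonincreasing integer step function decaying to $0$ is legitimately realized as some $n_h$ with $h\in\rz$. Once that bookkeeping is in place, the two halves of the lemma are the one-line computations above; in particular the sufficiency reduces to the single observation that the pointwise maximum $\max(n_f,n_g)$ is itself an admissible root-counting function, which is precisely where the hypothesis $|n_f-n_g|\le 1$ is consumed.
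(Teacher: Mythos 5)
The paper offers no proof of this lemma to compare against: it is quoted directly from Chudnovsky and Seymour (\cite[3.5]{Chudnovsky07}) and used as a black box, so your attempt has to be judged on its own merits, and on those merits it is correct. Your dictionary $(\star)$, namely $f\sep F$ if and only if $n_F(x)-n_f(x)\in\{0,1\}$ for all $x\in\R$, is exactly right: evaluating at $x=r_i$ forces $s_i\geq r_i$, evaluating at $x=s_{i+1}$ forces $r_i\geq s_{i+1}$, the converses follow from the sortedness of the zero sequences, and the limits at $-\infty$ encode $\deg f\le\deg F\le\deg f+1$; all of this survives repeated zeros because $n_f$ counts with multiplicity. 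Necessity is then the one-line difference computation you give, and your sufficiency construction is sound: $\max(n_f,n_g)$ is a left-continuous, nonincreasing, integer-valued step function vanishing near $+\infty$, hence is $n_h$ for the polynomial $h\in\rz$ whose zeros are its jump points with multiplicities the jump sizes, and $|n_f-n_g|\le 1$ is precisely what keeps $n_h-n_f$ and $n_h-n_g$ in $\{0,1\}$. It is worth noting that your route is essentially a reconstruction of Chudnovsky and Seymour's own argument: taking the pointwise maximum of $n_f$ and $n_g$ is the same as taking, for each rank $i$, the zero $\max(r_i,s_i)$, which is how their common interleaver is assembled; your step-function formulation has the mild advantage of absorbing multiplicities and the degree constraint uniformly, with no case analysis.
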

\begin{lemma}[{\cite[3.6]{Chudnovsky07}}]\label{Chudnovsky-2}
Let $f_1(x),f_2(x),\ldots,f_k(x)$ be polynomials with positive leading coefficients and all zeros real. Then
following three statements are equivalent:
\begin{enumerate}
  \item [\rm (a)] $f_1(x),f_2(x),\ldots,f_k(x)$ are pairwise compatible,
  \item [\rm (b)] for all $s,t$ such that $1\leq s<t\leq k$, the polynomials $f_s,f_t$ have a common interleaver,
  \item [\rm (c)] $f_1(x),f_2(x),\ldots,f_k(x)$ are compatible.
\end{enumerate}
\end{lemma}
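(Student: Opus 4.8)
The plan is to establish the cycle of implications (c) $\Rightarrow$ (a) $\Rightarrow$ (b) $\Rightarrow$ (c). The first implication is immediate: restricting a positive combination to two nonzero coefficients shows that compatibility of the whole family forces compatibility of every pair. The remaining two implications both rest on a \emph{core lemma} for two polynomials, which I would isolate and prove first: if $f,g\in\rz$ have positive leading coefficients, then $f$ and $g$ are compatible if and only if they have a common interleaver. Granting this, (a) $\Rightarrow$ (b) is the ``compatible $\Rightarrow$ common interleaver'' direction applied to each pair, while the reverse direction of the core lemma will feed the induction that yields (b) $\Rightarrow$ (c).

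For the core lemma, the direction \emph{common interleaver $\Rightarrow$ compatible} is the classical fact that the polynomials interleaved by a fixed $h$ form a cone. If $f\sep h$ and $g\sep h$, then at each zero $\theta$ of $h$ both $f(\theta)$ and $g(\theta)$ are nonzero and, since $f$ and $g$ have positive leading coefficients and interlace $h$ in the same pattern, their signs alternate identically along the zeros of $h$; hence $c_1f(\theta)+c_2g(\theta)$ alternates as well, and counting sign changes forces $c_1f+c_2g$ to have a zero in every gap of $h$, so $c_1f+c_2g\sep h$ and in particular $c_1f+c_2g\in\rz$. For the converse \emph{compatible $\Rightarrow$ common interleaver} I would invoke Lemma~\ref{Chudnovsky} and reduce to verifying $|n_f(x)-n_g(x)|\le 1$ for all $x$. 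Fixing $a\in\R$, consider the segment $g_t=(1-t)f+tg$, $t\in[0,1]$, which lies in $\rz$ throughout by compatibility. Since $g_t(a)=f(a)+t(g(a)-f(a))$ is affine in $t$, it vanishes for at most one $t_0$, so a zero of $g_t$ can cross $a$ only at $t=t_0$; consequently $n_{g_t}(a)$ takes only the two values $n_f(a)$ (for $t<t_0$) and $n_g(a)$ (for $t>t_0$). A jump of size at least $2$ at $t_0$ would force $a$ to be a zero of $g_{t_0}$ of multiplicity at least $2$, and since $g_t(a)$ changes sign at $t_0$ this multiple zero would split off a complex-conjugate pair on one side of $t_0$, contradicting $g_t\in\rz$. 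Hence $|n_f(a)-n_g(a)|\le1$.

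I would then prove (b) $\Rightarrow$ (c) by induction on $k$, the cases $k\le2$ being the core lemma. For the step, set $g=\sum_{i=1}^{k-1}c_if_i$; the subfamily $f_1,\dots,f_{k-1}$ again satisfies (b), so by induction it is compatible and $g\in\rz$. By the core lemma it now suffices to produce a common interleaver for $g$ and $f_k$, i.e.\ to check $|n_g(x)-n_{f_k}(x)|\le1$. For this I would use a \emph{sandwiching} lemma: for a compatible family $h_1,\dots,h_m$ and nonnegative $d_i$, one has $\min_i n_{h_i}(x)\le n_{\sum_id_ih_i}(x)\le\max_i n_{h_i}(x)$. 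This follows by induction on $m$ from the two-term statement $n_{c_1h_1+c_2h_2}(a)\in\{n_{h_1}(a),n_{h_2}(a)\}$, which is precisely the affine-in-$t$ observation above, applied at each stage to the pair $\bigl(\sum_{i<m}d_ih_i,\,h_m\bigr)$; that pair is compatible because every positive combination of the $h_i$ lies in $\rz$. Applying the sandwiching lemma with $h_i=f_i$ $(i<k)$ places $n_g(x)$ between $\min_i n_{f_i}(x)$ and $\max_i n_{f_i}(x)$, and since each $f_i$ $(i<k)$ shares a common interleaver with $f_k$, Lemma~\ref{Chudnovsky} gives $|n_{f_i}(x)-n_{f_k}(x)|\le1$; therefore $|n_g(x)-n_{f_k}(x)|\le1$, and the inductive step is complete.

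The main obstacle is the deformation analysis of $g_t$ that underlies both the converse of the core lemma and the sandwiching lemma. One must control how the real zeros of $g_t$ migrate as $t$ varies: ruling out a zero of multiplicity at least $2$ crossing the test point, and handling the possibility that the leading coefficient of $g_t$ degenerates (a drop in degree, i.e.\ zeros escaping to $\pm\infty$) when $\deg f\ne\deg g$. Making the ``a multiple zero splits off a complex pair'' step precise---through a local Rouch\'e or Puiseux argument at $t_0$---is the delicate point. Once it is in place, the two inductions are routine and free of circularity, since at every stage the subfamilies invoked are strictly smaller and their compatibility is supplied by the inductive hypothesis.
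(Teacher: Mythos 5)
The paper does not prove this lemma at all: it is quoted verbatim from Chudnovsky and Seymour \cite{Chudnovsky07} and used as a black box, just like Lemma~\ref{Chudnovsky}. So there is no internal proof to compare against; the relevant benchmark is the original Chudnovsky--Seymour argument, and your proposal is essentially a reconstruction of it rather than a different route. Your cycle (c) $\Rightarrow$ (a) $\Rightarrow$ (b) $\Rightarrow$ (c), the two-polynomial core lemma (compatible $\Leftrightarrow$ common interleaver, with the cone direction by sign alternation and the converse by deforming $(1-t)f+tg$ and tracking the counting function $n$), and the induction on $k$ that sandwiches $n_{\sum_i c_if_i}$ between the pairwise-constrained values $n_{f_i}$ are exactly the mechanisms of their Section 3. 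The strategy is sound, non-circular, and the reduction of (b) $\Rightarrow$ (c) to the inequality $|n_g(x)-n_{f_k}(x)|\leq 1$ via Lemma~\ref{Chudnovsky} is correctly organized.

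Two points in your sketch still need to be finished before this is a complete proof, and you only partially flag them. First, in the cone direction you assert that $f(\theta)$ and $g(\theta)$ are nonzero at every zero $\theta$ of the common interleaver $h$; this fails because the relation $\sep$ permits equalities (shared zeros), so you must first factor out common zeros or argue by perturbation and a Hurwitz-type limit. Second, your contradiction ``a multiple zero at the test point splits off a complex-conjugate pair on one side of $t_0$'' is unavailable when $t_0$ is an endpoint of $[0,1]$, i.e.\ when the test point $a$ is itself a multiple zero of $f$ or of $g$: the complex pair may then split off only for $t$ outside $[0,1]$, where no hypothesis applies. At an endpoint you need the supplementary local (Puiseux) analysis: multiplicity $\geq 3$ at $a$ forces complex branches for $t$ inside the interval, hence is excluded, while a double zero at $a$ must separate into one real zero on each side of $a$, so $n$ still jumps by exactly $1$. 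Similarly, the degree-drop worry you raise resolves cleanly: since both leading coefficients are positive, the escaping zero of $(1-t)f+tg$ tends to $-\infty$, not $+\infty$, so it never affects $n(\cdot)(a)$. With these repairs your argument is correct and matches the source the paper cites.
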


By~\eqref{zeros-inequ1} and~\eqref{zeros-inequ2}, we obtain
$$|n_{a_{n-1}}(x)-n_{b_{n}}(x)|\leq 1,\quad |n_{a_{n-1}}(x)-n_{d_{n}}(x)|\leq 1$$
and $$|n_{d_{n}}(x)-n_{b_{n}}(x)|\leq 1$$
for all $x\in\R$.
Combining Lemma~\ref{Chudnovsky} and Lemma~\ref{Chudnovsky-2}, we get the following result.
\begin{theorem}\label{compatible}
For $n\geq 2$, the polynomials $a_{n-1}(x),b_{n}(x)$ and $d_n(x)$ are compatible.
Equivalently, the polynomials $A_{n-1}(x),B_{n}(x)$ and $D_{n}(x)$ are compatible.
\end{theorem}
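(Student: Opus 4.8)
The plan is to combine the interlacing data already assembled before the statement with the two lemmas of Chudnovsky and Seymour, and then to transport the resulting compatibility back through the change of variables $y=\frac{x-1}{x+1}$ that defines $a_n,b_n,d_n$. First I would record that all three polynomials $a_{n-1}(x)$, $b_n(x)$, $d_n(x)$ lie in $\rz$ and have positive leading coefficients, so that the hypotheses of Lemma~\ref{Chudnovsky-2} are met: real-rootedness of $a_{n-1}$ and $b_n$ follows from Lemma~\ref{Hetyei} via~\eqref{anx}, that of $d_n$ from Theorem~\ref{mthmzero}, and positivity of the leading coefficients is visible in the factorizations produced in the proof of Theorem~\ref{mthmzero} (for instance $d_{2n}(x)=\tfrac{(2n)!}{2}\prod_i(x-c_i)(x+c_i)$ has leading coefficient $\tfrac{(2n)!}{2}>0$).

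Next I would verify, for every $x\in\R$ and every pair among $\{a_{n-1},b_n,d_n\}$, the bound $|n_f(x)-n_g(x)|\le 1$. Here the even and odd cases split: for $n=2m$ the three polynomials are $a_{2m-1},b_{2m},d_{2m}$, whose positive zeros interlace according to~\eqref{zeros-inequ1}, while for $n=2m+1$ they are $a_{2m},b_{2m+1},d_{2m+1}$, governed by~\eqref{zeros-inequ2}. Because each of these polynomials is even or odd (its nonzero zeros occur in pairs $\pm$, with at most a common simple zero at the origin in the odd case), the interlacing of the positive zeros in~\eqref{zeros-inequ1}--\eqref{zeros-inequ2} propagates by symmetry to the whole real line, yielding exactly the three inequalities $|n_{a_{n-1}}-n_{b_n}|\le1$, $|n_{a_{n-1}}-n_{d_n}|\le1$, $|n_{d_n}-n_{b_n}|\le1$ displayed before the theorem. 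By Lemma~\ref{Chudnovsky} each of the three pairs then admits a common interleaver, which is precisely condition (b) of Lemma~\ref{Chudnovsky-2}; invoking the implication (b)$\Rightarrow$(c) shows that $a_{n-1}(x),b_n(x),d_n(x)$ are compatible.

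Finally I would establish the ``equivalently'' clause by pushing compatibility through the defining substitution. Since $a_{n-1}$, $b_n$ and $d_n$ all have degree $n$, consider the linear map $\Phi(f)(x)=(x+1)^n f\!\left(\frac{x-1}{x+1}\right)$ acting on polynomials of degree at most $n$; it is a bijection, it commutes with linear combinations, and by the definitions of $a_n,b_n,d_n$ it satisfies $\Phi(A_{n-1})=a_{n-1}$, $\Phi(B_n)=b_n$, and $\Phi(D_n)=2^n d_n$. Being induced by the real M\"obius transformation $y\mapsto\frac{y-1}{y+1}$, the map $\Phi$ carries a real-rooted polynomial to a real-rooted one and conversely (a root $\rho\neq1$ is sent to $\frac{1+\rho}{1-\rho}$, and any drop in degree is absorbed into a zero at $x=-1$). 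Hence for nonnegative $c_1,c_2,c_3$ we have $c_1a_{n-1}+c_2b_n+2^nc_3 d_n=\Phi\!\left(c_1A_{n-1}+c_2B_n+c_3D_n\right)\in\rz$ if and only if $c_1A_{n-1}+c_2B_n+c_3D_n\in\rz$; as $2^nc_3$ ranges over all nonnegative reals when $c_3$ does, the two compatibility assertions are equivalent.

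I expect no serious obstacle, since the heavy lifting has already been done in Theorem~\ref{mthmzero} and in the interlacing estimates~\eqref{zeros-inequ1}--\eqref{zeros-inequ2}. The two points that do require care are the reduction of the sign-symmetric zero patterns to the \emph{uniform} bound $|n_f(x)-n_g(x)|\le1$ valid for \emph{all} $x\in\R$ (and not merely at the listed zeros), and the verification that $\Phi$ is a genuine $\rz$-preserving bijection, including the bookkeeping of the zero at $x=-1$ when degrees drop.
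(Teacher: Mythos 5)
Your proposal is correct and follows essentially the same route as the paper: the interlacing inequalities~\eqref{zeros-inequ1} and~\eqref{zeros-inequ2} give $|n_f(x)-n_g(x)|\leq 1$ for each pair, Lemma~\ref{Chudnovsky} then yields pairwise common interleavers, and Lemma~\ref{Chudnovsky-2} converts this into compatibility. Your two additional points of care --- propagating the interlacing of positive zeros to all of $\R$ via the even/odd symmetry, and verifying that the substitution $\Phi(f)(x)=(x+1)^n f\bigl(\tfrac{x-1}{x+1}\bigr)$ preserves real-rootedness so that compatibility of $a_{n-1},b_n,d_n$ transfers to $A_{n-1},B_n,D_n$ --- are exactly the steps the paper leaves implicit, and you fill them in correctly.
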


\bibliographystyle{amsplain}

\begin{thebibliography}{19}
\bibitem{Branden06}
P. Br\"and\'en, \textit{On linear transformations preserving the P\'olya frequency property,} Trans. Amer. Math. Soc.
358 (2006), 3697--3716.

\bibitem{Brenti94}
F. Brenti, \textit{q-Eulerian polynomials arising from Coxeter groups}, European J. Combin. 15 (1994), 417--441.

\bibitem{Carlitz72}
L. Carlitz and R. Scoville, \textit{Tangent numbers and operators}, Duke Math. J. 39 (1972), 413--429.

\bibitem{Carlitz75}
L. Carlitz, \textit{Permutations, sequences and special functions}, SIAM Review, 17 (1975), 298--322.


\bibitem{Chow03}
C.-O. Chow, \textit{On the Eulerian polynomials of type D}, European J. Combin. 24 (2003), 391--408.

\bibitem{Chow08}
C.-O. Chow, \textit{On certain combinatorial expansions of the Eulerian polynomials}, Adv. in Appl. Math. 41 (2008), 133--157.

\bibitem{Chudnovsky07}
M. Chudnovsky, P. Seymour, \textit{The roots of the independence polynomial of a clawfree graph},
J. Combin. Theory Ser. B 97 (2007), 350--357.

\bibitem{Cvijovic09}
D. Cvijovi\'c, \textit{Derivative polynomials and closed-form higher derivative formulae},
Appl. Math. Comput. 215 (2009), 3002--3006.

\bibitem{Cvijovic10}
D. Cvijovi\'c, \textit{The Lerch zeta and related functions of non-positive integer order},
Proc. Amer. Math. Soc. 138 (2010), 827--836.

\bibitem{Dilks09}
K. Dilks, T.K. Petersen, J.R. Stembridge, \textit{Affine descents and the Steinberg torus}, Adv Appl. Math.
42 (2009), 423--444.

\bibitem{Franssens07}
G.R. Franssens, \textit{Functions with derivatives given by polynomials in the function itself or a related function}, Anal. Math. 33 (2007), 17--36.

\bibitem{Hetyei08}
G. Hetyei, \textit{Tchebyshev triangulations of stable simplicial complexes},
J. Combin. Theory Ser. A 115 (2008), 569--592.

\bibitem{Hirzebruch08}
F. Hirzebruch, \textit{Eulerian polynomials}, M\"unster J. of Math. 1 (2008), 9--14.

\bibitem{Hoffman95}
M.E. Hoffman, \textit{Derivative polynomials for tangent and secant}, Amer. Math.
Monthly 102 (1995), 23--30.

\bibitem{Liu07}
L. Liu and Y. Wang, \textit{A unified approach to polynomial sequences with only real
zeros}, Adv. in Appl. Math. 38 (2007), 542--560.


\bibitem{Ma12}
S.-M. Ma, \textit{An explicit formula for the number of permutations with a given number of alternating runs},
J. Combin. Theory Ser. A 119 (2012), 1660--1664.

\bibitem{Sloane}
N.J.A. Sloane, \textit{The On-Line Encyclopedia of Integer Sequences},
http://oeis.org.

\bibitem{Stembridge94}
J.R. Stembridge, \textit{Some permutation representations of Weyl groups associated with the cohomology of toric varieties},
Adv. Math. 106 (1994), 244--301.

\bibitem{Visontai12}
M. Visontai, N. Williams, \textit{Stable multivariate $W$-Eulerian polynomials}, arXiv:1203.0791v1.

\end{thebibliography}

\end{document}